\colorlet{ColorPink}{red!30}
\newcommand{\R}{\mathbb R}
\newcommand{\dif}{\operatorname{d}\!}
\newcommand{\lebe}{L}
\newcommand{\imag}{\operatorname{i}}
\newcommand{\locc}{\loc}
\newcommand{\hold}{C}
\newcommand{\bv}{\setBV}
\newcommand{\bd}{\ensuremath{{\mathrm{BD}}}} 
\newcommand{\ball}{B}
\newcommand{\A}{\mathbb{A}}
\newcommand{\mres}{%
   \,\raisebox{-.127ex}{\reflectbox{\rotatebox[origin=br]{-90}{$\lnot$}}}\,%
 }
\renewcommand{\mres}{\mathbin{\vrule height 1.6ex depth 0pt width
0.13ex\vrule height 0.13ex depth 0pt width 1.3ex}}
\numberwithin{equation}{section}
\theoremstyle{theorem}
\newtheorem{theorem}{Theorem}[section]
\newtheorem{lemma}[theorem]{Lemma}
\newtheorem{proposition}[theorem]{Proposition}
\newtheorem{corollary}[theorem]{Corollary}
\newtheorem{remark}[theorem]{Remark}
\providecommand{\opA}{\A}
\renewcommand{\dashint}{\fint}
\begin{document}

\renewcommand{\baselinestretch}{1.05}

\title[Continuity points via Riesz potentials]{Continuity points via Riesz potentials \\ for $\mathbb{C}$-elliptic operators}
\author[L.~Diening]{Lars Diening}
\address[L.~Diening]{Universit\"{a}t Bielefeld, Fakult\"{a}t für Mathematik, 33501 Bielefeld, Germany}
\author[F.~Gmeineder]{Franz Gmeineder}
\address[F.~Gmeineder]{Mathematical Institute, University of Bonn, Endenicher Allee 60, 53115 Bonn, Germany}
\thanks{\emph{Keywords:} Functions of bounded $\A$-variation, Riesz potentials, fine properties, Lebesgue continuity points, approximate continuity points.}
\thanks{\emph{Acknowledgment}: F.G. acknowledges financial support by the Hausdorff Centre for Mathematics, Bonn, and the University of Bielefeld.}

\maketitle
\begin{abstract}
  We establish a Riesz potential criterion for Lebesgue continuity
  points of functions of bounded $\A$-variation, where~$\A$ is a
  $\mathbb{C}$-elliptic differential operator of arbitrary order. This
  result might even be of interest for classical functions of bounded
  variation.
\end{abstract}

\section{Introduction}

Functions of bounded variation are a vastly studied subject, mainly as
they form the natural function space framework for a variety of
variational problems. Hence it is particularly important to understand
their fine properties, and a wealth of contributions on this theme is
available, cf. \textsc{Ambrosio} et al. \cite{AFP} and the references
therein. When dealing with \emph{full} gradients, powerful tools such
as the coarea formula are available, facilitating the proofs of key
results in this context such as the absolute continuity of $Du$ for
$\mathscr{H}^{n-1}$ or in the study of Lebesgue discontinuity points.

Various variational problems, however, require to work with more
general differential operators than the usual gradient, see
\cite{FuchsSeregin} for a comprehensive account of problems from
elasticity or plasticity. To provide a unifying approach to this
topic, let $\A$ be a $k$-th order, homogeneous, constant-coefficient
differential operator on $\R^{n}$ between the two finite dimensional
real vector spaces $V$ and $W$. By this we understand that $\A$ has a
representation
\begin{align}\label{eq:form}
  \A u = \sum_{\substack{\alpha\in\mathbb{N}_{0}^{n} \\ |\alpha|=k}}\A_{\alpha}\partial^{\alpha}u, 
\end{align}
where $\A_{\alpha}\in\mathscr{L}(V;W)$ are fixed linear maps; note
that $\partial^{\alpha}$ acts compontentwisely on a function
$u\,:\,\R^{n}\to V$. If $\A$ is \emph{elliptic}
(cf. \textsc{H\"{o}rmander} \cite{Hoermander} or \textsc{Spencer}
\cite{Spencer}), meaning that the Fourier symbol
\begin{align}
  \A[\xi] = \sum_{|\alpha|=k}\xi_{\alpha}\A_{\alpha}\colon V\to W
\end{align}
is injective for all $\xi\in\R^{n}\setminus\{0\}$, then elementary
Fourier multiplier techniques establish that for each $1<p<\infty$
there exists $c_{p}>0$ such that there holds
\begin{align}\label{eq:CZ}
  \|D^{k}u\|_{\lebe^{p}(\R^{n})}\leq c_{p}\|\A u\|_{\lebe^{p}(\R^{n})}\qquad\text{for all}\;u\in\hold_{c}^{\infty}(\R^{n};V). 
\end{align}
Inequalities of this type are usually referred to as
\textsc{Korn}-type inequalities, cf. \cite{Korn,Mosolov,BD} for
instance. By a foundational result of \textsc{Ornstein}
\cite{Ornstein}, estimate \eqref{eq:CZ} does not persist for $p=1$ in
general. Indeed, by the sharp version as recently established by
\textsc{Kirchheim \& Kristensen} \cite{KK0,KK}, validity of
\eqref{eq:CZ} for $p=1$ is equivalent to the existence of some
$T\in\mathscr{L}(W;V\odot^{k}\R^{n})$ such that $D^{k}=T\circ\A$. In
this case, however, \eqref{eq:CZ} trivalises, leading to the informal
metaprinciple that \emph{there are no non-trivial
  $\lebe^{1}$-estimates}.

Let $u\in\lebe^{1}(\R^{n};V)$. Given a differential operator of the form \eqref{eq:form}, we say that $u$ is of \emph{bounded $\A$-variation} and write $u\in\bv^{\A}(\R^{n})$ if and only if the distributional differential expression $\A u$ can be represented by a finite $W$-valued Radon measure, denoted $\A u\in\mathscr{M}(\R^{n};W)$. The space $\bv_{\locc}^{\A}$ then is defined in the obvious way. This class of function spaces has been introduced in \cite{BDG,GR1}, and by \textsc{Ornstein}'s Non-Inequality, we have $\bv\subsetneq \bv^{\A}$ in general. As a major obstruction in the study of $\bv^{\A}$-maps, the failure of \eqref{eq:CZ} for $p=1$ equally rules out the use of full gradient techniques. Still, as is by now well-known, several properties of $\bv$-maps can be proven to hold for $\bv^{\A}$-maps, too, and we refer the reader to \cite{VS13,VS14,VS14a,BDG,GR1,GR2,GR3,Raita1,Raita2} for a comprehensive list of results in this area. 

Based on \textsc{Smith} \cite{Smith}, in \cite{BDG} \textsc{Breit} and
the authors isolated a key property of first order differential
operators $\A$ to yield \emph{boundary trace embeddings}
$\bv^{\A}(\Omega)\hookrightarrow\lebe^{1}(\partial\Omega;V)$, namely
\emph{$\mathbb{C}$-ellipticity}. We say that a differential operator
of the form \eqref{eq:form} is $\mathbb{C}$-elliptic provided the
complexified Fourier symbol
\begin{align*}
\A[\xi]\colon V+\imag V \to W+\imag W\qquad\text{is injective for all}\;\xi\in\mathbb{C}^{n}\setminus\{0\}. 
\end{align*} 
As a particular consequence of \cite[Thm.~1.1]{BDG}, $\mathbb{C}$-elliptic differential operators have finite dimensional nullspace (in $\mathscr{D}'(\R^{n};V)$) consisting of polynomials of a fixed maximal degree exclusively. As is well-known from the classical $\bv$-theory, interior traces are instrumental for a description of the jump parts and hence the set of Lebesgue discontinuity points. 
This is the starting point for the present paper, where we aim to introduce Riesz potential techniques in the study of $\bv^{\A}$-functions for the particular case of $\mathbb{C}$-elliptic differential operators. 

It is easy to see that finiteness of the fractional maximal operator $\mathcal{M}_{k}(\A u)$ with 
\begin{align*}
\mathcal{M}_{k}\mu(x):=\sup_{\ball\ni x}\frac{|\mu|(\ball(x,r))}{r^{n-k}},\qquad \mu\in\mathcal{M}(\R^{n};W),\;x\in\R^{n},
\end{align*}
cannot yield a criterion for Lebesgue continuity points. In fact,
consider $u=\mathbbm{1}_{\{x: |x|<1, x_{n}>0\}}$ and $k=1$, for which
$\mathcal{M}_{1}(Du)(x)<\infty$ for any
$x\in\partial\ball(0,1)$. Opposed to this, the main result of the
present paper is that the Riesz potentials are sufficiently powerful
to \emph{detect Lebesgue continuity points} indeed. Given a Radon
measure $\mu\in\mathscr{M}(\R^{n};W)$ and $s>0$, we define the
\emph{Riesz potential of $\mu$ of order $s$} by
\begin{align}\label{eq:defRiesz}
  \mathcal{I}_{s}(\mu)(x_{0}):= 
  \int_{\ball(x_{0},r)}\frac{\dif|\mu|(y)}{|x_{0}-y|^{n-s}}<\infty,\qquad x_{0}\in\R^{n}. 
\end{align}
Let $S_{u}$ denote the set of \emph{non-Lebesgue points} of a map
$u\in\lebe_{\locc}^{1}(\R^{n};V)$.
\begin{theorem}
  \label{thm:main}
  Let $n\geq 2$, $k\geq 1$ and let $\A$ be a $k$-th order $\mathbb{C}$-elliptic
  differential operator of the form \eqref{eq:form}. Then the
  following hold:
  \begin{enumerate}
  \item\label{item:main1} If $u\in\bv_{\locc}^{\A}(\R^{n})$ and
    $x_{0}\in\R^{n}$ are such that for some $r>0$ there holds
    \begin{align}\label{eq:maineq}
      \mathcal{I}_{k}(\A u\mres\ball(x_{0},r))(x_{0})<\infty, 
    \end{align}
    then $x_{0}$ is a Lebesgue point for $u$, i.e. $x_{0}\in S_{u}^{\complement}$. 
  \item\label{item:main2} If $k=n$, then any
    $u\in\bv_{\locc}^{\A}(\R^{n})$ has a continuous representative. If
    $x \in \Rn$ and $r>0$, then for every~$y \in B(x,r/2)$ the
    continuous representative satisfies
    \begin{align}
      \label{eq:cont-est-thm}
      \abs{u(x)-u(y)}
      &\leq  c\,\abs{\bbA u}(B \setminus \set{x}) 
        + c\,\frac{\abs{x-y}}{r} \dashint_{B} \abs{u-\mean{u}_{B}}\dif z,
    \end{align}
    where $B := B(x,r)$.  
  \item\label{item:main3} If $k>n$, then any
    $u\in\bv_{\locc}^{\A}(\R^{n})$ has a $C^{n-k}$
    representative.
    If
    $x \in \Rn$ and $r>0$, then for every~$y \in B(x,r/2)$ the
    continuous representative satisfies
    \begin{align}
      \label{eq:cont-est-thm}
      \begin{split}
      \abs{(\nabla^{k-n} u)(x) -(\nabla ^{k-n} u)(y)} 
      &\leq  c\,\abs{\bbA u}(B \setminus \set{x})
      \\
      &\quad
        + c\,\frac{\abs{x-y}}{r} \dashint_{B} \abs{\nabla^{k-n}
        u-\mean{\nabla^{k-n} u}_{B}}\dif z, 
        \end{split}
    \end{align}
    where $B := B(x,r)$.  
  \end{enumerate}
\end{theorem}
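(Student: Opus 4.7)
My approach rests on a scale-invariant Poincaré--Korn inequality that $\mathbb{C}$-ellipticity is expected to supply (see \cite{BDG,KK}, where $\ker\A$ is identified with a finite-dimensional space of polynomials of degree $<k$): on every ball $B=B(z,r)$ there exists a canonical projection $\Pi_B u\in\ker\A$ with
\begin{align*}
  \dashint_B|u-\Pi_B u|\dif y\leq c\,\frac{|\A u|(B)}{r^{n-k}},
\end{align*}
together with the scale-invariant norm equivalence $r^{|\alpha|}\|\partial^\alpha P\|_{L^\infty(B)}\leq c\dashint_B|P|\dif y$ on $\ker\A$ for $|\alpha|<k$. These two facts, both inherited from $\mathbb{C}$-ellipticity, will drive every part of the argument.

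For part (a), I would apply this on dyadic balls $B_j:=B(x_0,2^{-j}r)$, set $P_j:=\Pi_{B_j}u$ and $a_j:=|\A u|(B_j)/r_j^{n-k}$. A layer-cake argument rewrites \eqref{eq:maineq} as $\sum_j a_j\leq c\,\mathcal{I}_k(\A u\mres B(x_0,r))(x_0)<\infty$. The triangle inequality on $B_{j+1}$ combined with the Poincaré estimates for $u-P_j$ and $u-P_{j+1}$ and the norm equivalence on $\ker\A$ yields $\|P_{j+1}-P_j\|_{L^\infty(B_{j+1})}\leq c\,\dashint_{B_{j+1}}|P_{j+1}-P_j|\dif y\leq c(a_j+a_{j+1})$, so $P_j(x_0)\to L$ for some $L\in V$ by telescoping. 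To promote pointwise to Lebesgue convergence, the oscillation $\dashint_{B_j}|P_j-P_j(x_0)|$ must also vanish. Expanding $P_j$ as a Taylor polynomial at $x_0$ and introducing the scaled coefficients $t_j^{(\alpha)}:=r_j^{|\alpha|}\partial^\alpha P_j(x_0)/\alpha!$, the inverse estimate on $\ker\A$ produces the contraction-type recursion
\begin{align*}
  |t_{j+1}^{(\alpha)}|\leq 2^{-|\alpha|}|t_j^{(\alpha)}|+c(a_j+a_{j+1}),\qquad 1\leq|\alpha|<k,
\end{align*}
which iterates to $t_j^{(\alpha)}\to 0$ because $a_j\to 0$. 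Combining, $\dashint_{B_j}|u-L|\dif y\to 0$, i.e.\ $x_0$ is a Lebesgue point.

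For (b) and (c), the Poincaré inequality specialises to $\dashint_B|u-\Pi_Bu|\leq c|\A u|(B)$ when $k=n$ and to $\dashint_B|u-\Pi_Bu|\leq c\,r^{k-n}|\A u|(B)$ when $k>n$. To derive \eqref{eq:cont-est-thm} in the $k=n$ case I would split
\begin{align*}
  u(x)-u(y)=[u(x)-\Pi_B u(x)]+[\Pi_B u(x)-\Pi_B u(y)]+[\Pi_B u(y)-u(y)],
\end{align*}
control the first and third terms by dyadic telescopes toward $x$ and $y$ starting from $B$ (which in the Morrey regime should collapse into $c\,|\A u|(B\setminus\{x\})$), and bound the middle, polynomial difference by $c(|x-y|/r)\dashint_B|u-\mean{u}_B|\dif z$ via the inverse estimate on $\ker\A$ plus Poincaré. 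Part (c) then follows by running the same scheme for $\nabla^{k-n}u$, which satisfies an analogous order-$n$ estimate once one commutes $\nabla^{k-n}$ with a representation formula for~$\A$.

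I expect the main obstacle to be the telescoping in the Morrey regime of (b) and (c). Unlike part (a), where summability of $\{a_j\}$ is handed to us by the Riesz hypothesis, for $k=n$ the quantities $a_j=c|\A u|(B_j)$ are only bounded and approach $c|\A u|(\{x\})$ in the limit $j\to\infty$. Collapsing the telescope into the clean quantity $|\A u|(B\setminus\{x\})$---with no divergent tail and no parasitic logarithmic factor---requires realising each $\|P_{j+1}-P_j\|_{L^\infty(B_{j+1})}$ as essentially the local contribution $|\A u|(B_j\setminus B_{j+1})$ plus a polynomial-oscillation remainder that the contraction argument of (a) keeps controlled. Securing this sharp dependence, and then extracting the exact linear factor $|x-y|/r$ rather than an inferior modulus, is the delicate technical step.
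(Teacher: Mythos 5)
Your part~(a) is essentially correct, and it takes a genuinely different route from the paper: where the paper sums oscillation estimates over dyadic \emph{annuli} (Proposition~\ref{pro:osc-sum} and Corollary~\ref{cor:osc-sum}), you work only with ball projections $P_j=\Pi_{B_j}u$ and upgrade pointwise convergence of $P_j(x_0)$ to Lebesgue convergence via a contraction recursion for the rescaled Taylor coefficients $t_j^{(\alpha)}$. That argument is sound; two small caveats are that your layer-cake bound $\sum_j a_j\leq c\,\mathcal{I}_{k}(\A u\mres B(x_0,r))(x_0)$ is only valid for $k<n$ (harmless, since for $k\geq n$ part~(a) follows from (b)--(c)), and that the nullspace of a $\mathbb{C}$-elliptic operator consists of polynomials of some degree $m$ which may exceed $k-1$, so the recursion must be run for all $1\leq|\alpha|\leq m$.

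For parts~(b) and~(c), however, the proposal has a genuine gap, and it is exactly the one you flag yourself. With ball-based Poincar\'{e} inequalities the telescope toward $x$ produces $\sum_j\bigl(|\A u|(B_j)+|\A u|(B_{j+1})\bigr)$, which rearranges to $\sum_m (m+1)\,|\A u|(B_m\setminus B_{m+1})$ plus infinitely many copies of $|\A u|(\{x\})$: a parasitic logarithmic weight and a possible atom. There is no mechanism in your scheme to ``realise $\|P_{j+1}-P_j\|_{L^\infty}$ as the shell contribution $|\A u|(B_j\setminus B_{j+1})$'', because the Poincar\'{e} inequality on the full ball $B_{j+1}$ unavoidably sees all of $|\A u|(B_{j+1})$. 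The paper's missing ingredient is a Poincar\'{e} inequality on dyadic annuli and on punctured balls (Corollary~\ref{cor:poincare-annulus}; this is where connectedness of annuli, i.e.\ $n\geq 2$, and the polynomial structure of the nullspace enter), combined with the finite overlap of the annuli $2^{-m}\mathfrak{A}$ and the geometric weights $2^{m-j}$ in Proposition~\ref{pro:osc-sum}; these collapse the double sum to $c\,|\A u|(B\setminus\{x\})$ with no logarithmic loss. Moreover, the quantitative estimate \eqref{eq:cont-est-thm} -- including the exclusion of the atom at $x$ -- is not obtained in the paper by telescoping at all, but from the Smith--Ka\l{}amajska representation formula \eqref{eq:repformula} applied with a ball $B'$ in the outer annulus, which after an averaging/limiting argument at the Lebesgue point $x$ gives $|u(x)-(\mathbb{P}^m_{B'}u)(x)|\leq c\,|\A u|\bigl(C_x\setminus\{x\}\bigr)$ with $C_x$ the convex hull of $\overline{B'}\cup\{x\}$ (alternatively one may invoke that $\A u$ cannot charge singletons for $\mathbb{C}$-elliptic $\A$). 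Your treatment of the middle, polynomial term is fine, but without the annulus Poincar\'{e} inequality or the representation formula the endpoint terms are not controlled by $|\A u|(B\setminus\{x\})$, so as written (b) and (c) -- and hence also the claimed $C^{k-n}$ regularity in (c) -- remain unproven.
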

Throughout the paper we consider the case~$n\geq 2$. Indeed, for $n=1$
the only elliptic operators are of the form $a\frac{\dif}{\dif x}$
for $a\in\R\setminus\{0\}$. Hence, if $n=1$, $\bv^{\A}(\R)=\bv(\R)$
for all elliptic operators $\A$ and so \ref{item:main2} of
Theorem~\ref{thm:main} fails. The case $n \geq 2$ differs from the
case~$n=1$ in the fact that annuli are connected for~$n\geq 2$ and
allow therefore to deduce a Poincar\'{e}-type inequality on annuli,
cf.~Corollary \ref{cor:poincare-annulus}. So from now on let~$n \geq 2$.

Let us define, for
$u\in\bv^{\A}(\R^{n})$,
\begin{align}
  \Sigma_{u}:=\left\{x_{0}\in\R^{n}\colon\;\mathcal{I}_{k} (\A u\mres\ball(x_{0},r))=\infty\;\text{for all}\;r>0\right\}.  
\end{align}
By Theorem~\ref{thm:main}~\ref{item:main1},
$\Sigma_{u}^{\complement}\subset S_{u}^{\complement}$ and so
$S_{u}\subset \Sigma_{u}$. From general measure and potential
theoretic principles it then follows that the Hausdorff dimension of
$\Sigma_{u}$ and hence $S_{u}$ cannot exceed $(n-k)$ for a
$\mathbb{C}$-elliptic operator $\A$ and $u\in\bv^{\A}(\R^{n})$. If
$k\geq n \geq 2$, then by Theorem~\ref{thm:main}\ref{item:main2} and
Remark~\ref{rem:Linfty} more can be said: In this case, we have
$\bv^{\A}(\Omega)\hookrightarrow C^{k-n}(\overline{\Omega};V)$ for
every bounded Lipschitz domain. Thus, we obtain
an independent proof \emph{for the case of $\mathbb{C}$-elliptic
  operators} of a more general borderline embedding theory developed
by \textsc{Raita} \& \textsc{Skorobogatova} \cite[Thm.~1.1, 1.3]{Raita2}
for elliptic and cancelling operators. The case $k>n$ establishes how \cite[Thm.~1.4]{Raita1} can be strengthened in the case of $\mathbb{C}$-elliptic operators and seems to be new. 

Let us finally explain the structure of the paper. In
Section~\ref{sec:prelims} we gather notation and background facts on
$\bv^{\A}$-functions and \Poincare{} type inequalities. In
Section~\ref{sec:oscill-estim}, we provide oscillation estimates for
functions $u$ in terms of the Riesz potentials of $\A u$, which will
eventually yield the proof of Theorem~\ref{thm:main} in
Section~\ref{sec:proofofthm}.

\section{Preliminaries}
\label{sec:prelims}

We start by fixing notation. Throughout, $B=B(x_0,r)$ denotes the open
ball in~$\setR^n$ of radius $r>0$ centered at $x_{0}$. We also often
use annuli of the form~$\frA = B \setminus \lambda \overline{B}$ for
some~$\lambda \in [0,\frac 12]$. For $s>0 $ we denote by $sB$,
resp. $s\frA$, the balls, resp. annuli, that are scaled by the
factor~$s$ by keeping the center in place.  As usual, we denote
$\mathscr{L}^{n}$ or $\mathscr{H}^{n-1}$ the $n$-dimensional Lebesgue
or $(n-1)$-dimensional Hausdorff measures, respectively, and sometimes
abbreviate $|U|:=\mathscr{L}^{n}(U)$. Whenever $B$ is an open ball, we
use the equivalent notations
\begin{align*}
  \mean{u}_{\ball}:=\dashint_{\ball}u\dif x :=\frac{1}{|\ball|}\int_{\ball}u\dif x. 
\end{align*}
Given a finite dimensional real vector space $X$, the finite,
$X$-valued Radon measures on the open set $\Omega$ are denoted
$\mathscr{M}(\Omega;X)$. Also, given $\mu\in\mathscr{M}(\Omega;X)$ and
Borel subsets $A,U$ of $\Omega$, we define
$(\mu\mres A)(U):=\mu(A\cap U)$. Finally, by $c>0$ we denote a generic
constant that might change from line to line and shall only be
specified if its precise dependence on other parameters is required.

For the following, let $\A$ be a $\mathbb{C}$-elliptic differential
operator of the form \eqref{eq:form}. We then record the following
facts, retrievable from \cite{BDG,GR1}. $\mathbb{C}$-ellipticity of
$\A$ implies that for any connected, open subset $\Omega\subset\R^{n}$
the nullspace
$N(\A;\Omega):=\{u\in\mathscr{D}'(\Omega;V)\colon\;\A u=0\}$ of $\A$
is finite dimensional and is a subset of the set of polynomials
$\mathscr{P}_{m}(\Omega;V)$ of a fixed maximal degree
$m\in\mathbb{N}$. As such, for any open, bounded and connected
$\Omega\subset\R^{n}$, $N(\A;\Omega)\subset\lebe^{2}(\Omega;V)$ and we may
define $\Pi_{\Omega}$ to be the $\lebe^{2}$-orthogonal projection onto
$N(\A;\Omega)$.

Following \textsc{Smith} \cite{Smith} or
\textsc{Ka\l{}amajska}~\cite{Kalamajska} we can represent
every~$u \in \bv^\A(\Omega)$, where~$\Omega$ is a star-shaped domain
with respect to a ball~$B_\Omega$ by a projection and a convolution
of~$\A u$ with a Riesz potential kernel. In particular, there
exists~$m \in \setN_0$ depending on~$\opA$ and an integral kernel
$\mathfrak{K}_\Omega\colon \Omega \times \Omega \to\mathscr{L}(W;V)$
which is $C^\infty$ off the diagonal $\set{x=y}$ and satisfies
$|\partial_x^\alpha \partial_y^\alpha \mathfrak{K}_{\Omega}(x,y)|\leq
c_{\alpha,\beta} \,|x-y|^{k-n-\abs{\alpha}-\abs{\beta}}$ such that
\begin{align}\label{eq:repformula}
  u(x) = \mathbb{P}_{B_\Omega}^mu(x) +
  \int\limits_{\convexhull(\overline{B_\Omega} \cup \set{x})}
  \mathfrak{K}_\Omega(x-y)\A u(y)\dif y
\end{align}
for $\mathscr{L}^{n}$-a.e. $x\in\Omega$, where $\bbP^m_{B_\Omega} u$ denote the averaged Taylor polynomial of
order~$m$ with respect to the ball~$B_\Omega$ and
$\convexhull(\overline{B_\Omega} \cup \set{x})$ is the closed convex hull of
$B_\Omega \cup \set{x}$.

It follows from this
representation and the property that
$\nabla^\ell \mathbb{P}_{B_\Omega}^m = \mathbb{P}_{B_\Omega}^{m-\ell}
\nabla^\ell$ that for every $l=0,\dots, k-1$ there exists 
$\mathfrak{K}_\Omega^\ell \colon \Omega \times \Omega
\to\mathscr{L}(W;V)$ which is $C^\infty$ off the diagonal $\set{x=y}$
and satisfies
$|\partial_x^\alpha \partial_y^\alpha \mathfrak{K}_{\Omega}^{\ell}(x,y)|\leq
c_{\alpha,\beta} \,|x-y|^{k-\ell-n-\abs{\alpha}-\abs{\beta}}$ such
that
\begin{align}\label{eq:repformula_high}
  \nabla^\ell u(x) =  \mathbb{P}_{B_\Omega}^{n-\ell}
  \nabla^\ell u(x) +
  \int\limits_{\convexhull(\overline{B_\Omega} \cup \set{x})}
  \mathfrak{K}_\Omega^\ell(x-y)\A u(y)\dif y.
\end{align}
Working from this representation, we infer the usual \Poincare{}-type estimates:
\begin{lemma}[\Poincare{} for star-shaped domains]
  \label{lem:poincare-ball-pre}
  Let $\Omega$ be star-shaped domain with respect to the
  ball~$B_\Omega$ with radius~$r_B$ and
  $\diameter(\Omega) \leq c\, r_B$.  Let $\A$ be a $k$-th order
  $\mathbb{C}$-elliptic differential operator of the form
  \eqref{eq:form}. Then there exists a constant $c=c(\A)>0$ such that
  \begin{align}
    \label{eq:poincare-ball-pre}
    \sum_{\ell=0}^{k-1} \dashint_{\Omega} r_B^\ell
    \abs{\nabla^\ell u-\nabla^\ell \mathbb{P}^m_{B_\Omega}u}\dif x
    &\leq c\,r^{k}\frac{|\A u|(\Omega)}{\abs{\Omega}}.
  \end{align}
  holds for all $u\in\bv^{\A}(\Omega)$. Recall that 
  $\nabla^\ell \mathbb{P}_{B_\Omega}^m =
  \mathbb{P}_{B_\Omega}^{m-\ell} \nabla^\ell$.
\end{lemma}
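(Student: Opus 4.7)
The plan is to read the estimate directly off the representation formula \eqref{eq:repformula_high}. The first step is to justify that \eqref{eq:repformula_high} continues to hold for $u\in\bv^{\A}(\Omega)$ in the integrated–against–measure form
\begin{align*}
  \nabla^{\ell}u(x) - \nabla^\ell\mathbb{P}^{m}_{B_{\Omega}} u(x) = \int_{\convexhull(\overline{B_{\Omega}}\cup\{x\})}\mathfrak{K}^{\ell}_{\Omega}(x,y)\dif \A u(y)\qquad\text{for }\mathscr{L}^n\text{-a.e. }x\in\Omega.
\end{align*}
This is achieved by mollifying $u$ on a slightly shrunken star-shaped subdomain $\Omega'\Subset\Omega$ (obtained e.g.\ by scaling $\Omega$ towards $B_{\Omega}$), applying \eqref{eq:repformula_high} to the smooth approximations $u_\varepsilon$, and passing to the limit using the weak-$\ast$ convergence $\A u_\varepsilon\rightharpoonup^{\ast}\A u$ together with the smoothness of $\mathfrak{K}^{\ell}_{\Omega}$ off the diagonal and the uniform bound $\sup_\varepsilon|\A u_\varepsilon|(\Omega')\leq|\A u|(\Omega)$.

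Armed with this representation, the pointwise kernel bound $|\mathfrak{K}^{\ell}_{\Omega}(x,y)|\leq c\,|x-y|^{k-\ell-n}$ gives the Riesz-type estimate
\begin{align*}
  |\nabla^{\ell}u(x) - \nabla^\ell\mathbb{P}^{m}_{B_{\Omega}} u(x)| \leq c\int_{\Omega}\frac{\dif|\A u|(y)}{|x-y|^{n-(k-\ell)}}.
\end{align*}
Integrating in $x\in\Omega$ and applying Tonelli, one uses for each fixed $y\in\Omega$ the polar-coordinate bound
\begin{align*}
  \int_{\Omega}\frac{\dif x}{|x-y|^{n-(k-\ell)}} \leq \int_{B(y,\diameter\Omega)}\frac{\dif x}{|x-y|^{n-(k-\ell)}} \leq c\,(\diameter\Omega)^{k-\ell} \leq c\,r_B^{k-\ell},
\end{align*}
where integrability holds because $k-\ell\geq 1$ and the last step uses the standing hypothesis $\diameter(\Omega)\leq c\,r_B$. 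Dividing by $|\Omega|$ yields
\begin{align*}
  \dashint_{\Omega}|\nabla^{\ell}u - \nabla^\ell\mathbb{P}^{m}_{B_{\Omega}} u|\dif x \leq c\,r_B^{k-\ell}\frac{|\A u|(\Omega)}{|\Omega|}.
\end{align*}

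Multiplying by $r_B^{\ell}$ and summing over $\ell=0,\ldots,k-1$ absorbs the factor $k$ into $c=c(\A)$ and delivers \eqref{eq:poincare-ball-pre}. The main obstacle is the very first step, i.e.\ lifting \eqref{eq:repformula_high} from smooth functions to general $u\in\bv^{\A}(\Omega)$ with $\A u$ merely a measure; the natural recipe is the exhaustion-by-star-shaped-subdomains/mollification argument sketched above, after which the remainder of the proof is just Fubini-plus-polar-coordinates bookkeeping and the elementary integrability $\int_{B(0,R)}|z|^{k-\ell-n}\dif z\simeq R^{k-\ell}$.
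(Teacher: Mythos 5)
Your argument is correct and is essentially the proof the paper intends: the lemma is meant to follow directly from the representation formula \eqref{eq:repformula_high} via the kernel bound $|\mathfrak{K}^{\ell}_{\Omega}(x,y)|\leq c\,|x-y|^{k-\ell-n}$, Fubini, and the elementary estimate $\int_{B(y,\diameter\Omega)}|x-y|^{k-\ell-n}\dif x\leq c\,(\diameter\Omega)^{k-\ell}\leq c\,r_B^{k-\ell}$. Your preliminary mollification step is fine but not really a separate obstacle, since the paper (following Smith and Ka\l{}amajska) already states \eqref{eq:repformula} and \eqref{eq:repformula_high} for general $u\in\bv^{\A}(\Omega)$ with $\A u$ a measure.
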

In the following we show how to replace the averaged Taylor polynomial
in this formula by the projection~$\Pi_B$.
\begin{corollary}
  \label{cor:poincare-ball}
  Under the assumptions of Lemma~\ref{lem:poincare-ball-pre} there holds
  \begin{align}
    \label{eq:poincare-ball}
    \sum_{\ell=0}^{k-1} \dashint_{\Omega} r_B^\ell
    \abs{\nabla^\ell u- \nabla^\ell \Pi_Bu}\dif x
    &\leq c\,r^{k}\frac{|\A u|(\Omega)}{\abs{\Omega}}.
  \end{align}
\end{corollary}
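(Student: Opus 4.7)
The strategy is to compare $\Pi_B u$ to the averaged Taylor polynomial $\mathbb{P}^m_{B_\Omega} u$ from Lemma~\ref{lem:poincare-ball-pre} and to control their difference, which is a polynomial of degree~$\leq m$. By the triangle inequality
\begin{align*}
\abs{\nabla^{\ell}u-\nabla^{\ell}\Pi_{B}u} \leq \abs{\nabla^{\ell}u-\nabla^{\ell}\mathbb{P}_{B_\Omega}^{m}u}+\abs{\nabla^{\ell}P},\qquad P := \mathbb{P}_{B_\Omega}^{m}u-\Pi_{B}u,
\end{align*}
Lemma~\ref{lem:poincare-ball-pre} takes care of the first summand. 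For the second, $P$ is a polynomial of bounded degree and $\diameter(\Omega)\lesssim r_B$, so finite-dimensional norm equivalence on $\mathscr{P}_m$ yields $\sum_{\ell=0}^{k-1}\dashint_\Omega r_B^\ell \abs{\nabla^\ell P}\dif x \lesssim \dashint_B \abs{P}\dif x$. Everything thus reduces to bounding $\|P\|_{L^1(B)}$.

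To this end, decompose $P = (I-\Pi_B)\mathbb{P}^m_{B_\Omega}u - \Pi_B(u-\mathbb{P}^m_{B_\Omega}u)$. Since $N(\A;B)$ is a finite-dimensional space of polynomials of degree $\leq m$ on which all $L^p(B)$-norms are equivalent, $\Pi_B$ extends to a bounded operator on $L^1(B)$ with a constant depending only on~$\A$. Combining this with Lemma~\ref{lem:poincare-ball-pre} at $\ell=0$ gives
\begin{align*}
\bigl\|\Pi_B\bigl(u-\mathbb{P}^m_{B_\Omega}u\bigr)\bigr\|_{L^{1}(B)} \lesssim \|u-\mathbb{P}^m_{B_\Omega}u\|_{L^{1}(B)} \lesssim r^{k}\abs{\A u}(\Omega).
\end{align*}

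The remaining term $(I-\Pi_B)\mathbb{P}^m_{B_\Omega}u$ lies in the finite-dimensional subspace $\mathscr{P}_m\cap N(\A;B)^{\perp_{L^2(B)}}$, on which $\A$ is injective thanks to $\mathbb{C}$-ellipticity (any polynomial in its kernel would belong to $N(\A;B)\cap N(\A;B)^\perp=\{0\}$). Rescaling $B$ to the unit ball and using norm equivalence on this finite-dimensional space therefore delivers the polynomial Poincar\'e estimate $\|q\|_{L^{1}(B)}\lesssim r^{k}\|\A q\|_{L^{1}(B)}$ for any such~$q$. Exploiting the commutation $\A\mathbb{P}^m_{B_\Omega}=\mathbb{P}^{m-k}_{B_\Omega}\A$ for averaged Taylor polynomials (which follows from $\partial^{\alpha}\mathbb{P}^m_{B_\Omega}=\mathbb{P}^{m-|\alpha|}_{B_\Omega}\partial^{\alpha}$ and linearity), one has $\A(I-\Pi_B)\mathbb{P}^m_{B_\Omega}u = \mathbb{P}^{m-k}_{B_\Omega}(\A u)$, a polynomial of degree $\leq m-k$ whose $L^{1}(B)$-norm is controlled by $\abs{\A u}(B)\leq\abs{\A u}(\Omega)$. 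Hence $\|(I-\Pi_B)\mathbb{P}^m_{B_\Omega}u\|_{L^{1}(B)}\lesssim r^{k}\abs{\A u}(\Omega)$, and the two estimates combine to give $\|P\|_{L^{1}(B)}\lesssim r^{k}\abs{\A u}(\Omega)$, which by the reductions above completes the proof.

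The main technical obstacle I anticipate is the polynomial Poincar\'e step: although it ultimately reduces to a finite-dimensional norm equivalence, one must invoke $\mathbb{C}$-ellipticity to ensure the injectivity of $\A$ on $\mathscr{P}_m\cap N(\A;B)^\perp$ and carefully track the $r_B^k$ scaling via a change of variables to the unit ball.
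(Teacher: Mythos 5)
Your proposal is correct and follows essentially the same route as the paper's proof: compare $\Pi_{B}u$ with the averaged Taylor polynomial $\mathbb{P}^{m}_{B_\Omega}u$, exploit that $N(\A)\subset\mathscr{P}_{m}$ is finite dimensional (so norm equivalence and scaling give the polynomial estimates), use the commutation $\A\,\mathbb{P}^{m}_{B_\Omega}=\mathbb{P}^{m-k}_{B_\Omega}\A$ together with the $L^{1}$-stability of $\mathbb{P}^{m-k}_{B_\Omega}$ and of $\Pi_{B}$, and invoke Lemma~\ref{lem:poincare-ball-pre} for the remaining term. If anything, your explicit treatment of the cross-term $\Pi_{B}(u-\mathbb{P}^{m}_{B_\Omega}u)$ is slightly more careful than the paper's compressed step, which replaces $\Pi_{B}u$ by $\Pi_{B}\mathbb{P}^{m}_{B_\Omega}u$ without spelling out this error term.
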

\begin{proof}
  It remains to show
  \begin{align*}
    \textrm{I} := \sum_{\ell=0}^{k-1} \dashint_\Omega r^\ell_\Omega \abs{
    \nabla^\ell (\mathbb{P}^{m}_{B_\Omega} u - \Pi_B
    u)}\,dx 
    &\leq c\,r^{k}\frac{|\A u|(\Omega)}{\abs{\Omega}}.
  \end{align*}
  Since $N(\A) \subset \mathscr{P}_m$ we have $\mathbb{P}^{m}_{B_\Omega} u - \Pi_B
    u = \mathbb{P}^{m}_{B_\Omega} u - \Pi_B
    \mathbb{P}^m_{B_\Omega} u$. Moreover, 
  for any~$p \in \mathscr{P}_m$ there holds
  \begin{align*}
    \sum_{\ell=0}^{k-1} \dashint_\Omega r^\ell_\Omega \abs{
    \nabla^\ell p - \nabla^\ell p}\,dx  &\leq c\, \dashint_\Omega \abs{\opA p}\,dx,
  \end{align*}
  since both sides define a norm on the finite dimensional
  space~$\mathscr{P}_l / N(\opA)$ and vanish on~$N(\opA)$. Thus,
  \begin{align*}
    \textrm{I} &\leq \sum_{\ell=0}^{k-1} \dashint_\Omega r^\ell_\Omega \abs{
                 \nabla^\ell (\mathbb{P}^{m}_{B_\Omega} u - \Pi_B
                 \mathbb{P}^m_{B_\Omega} u)}\,dx
    \\
               &\leq c\, \dashint_B \abs{\A (\mathbb{P}^{m}_{B_\Omega} u - \Pi_B
                 \mathbb{P}^m_{B_\Omega} u)}\,dx
    \\
               &= c\, \dashint_\Omega \abs{\mathbb{P}^{m-k}_{B_\Omega}
                 \opA u}\,dx.
    \\
               &\leq c\, \dashint_\Omega \abs{
                 \opA u}\,dx
  \end{align*}
  using that
  $\nabla^\ell \mathbb{P}_{B_\Omega}^m =
  \mathbb{P}_{B_\Omega}^{m-\ell} \nabla^\ell$ and the
  $\lebe^1$-stability of $\mathbb{P}^{m-k}_{B_\Omega}$. The proof of the corollary is complete. 
\end{proof}
We also need the \Poincare{}-type inequality for annuli and punctured balls.
\begin{corollary}[\Poincare{} annuli]
  \label{cor:poincare-annulus}
  Let $n \geq 2$.  Let $B$ be ball with radius~$r_B$. Let $\A$ be a
  $k$-th order $\mathbb{C}$-elliptic differential operator of the form
  \eqref{eq:form}. Then there exists a constant $c=c(\A)>0$ such that
  the following holds. Let $\Omega$ be the
  annulus~$\frA_\lambda := B \setminus \lambda \overline{B}$ with
  $\lambda \in [0,\frac 12]$. Then
  \begin{align}
    \label{eq:poincare-annulus}
    \sum_{\ell=0}^{k-1} \dashint_{\Omega} r_B^\ell
    \abs{\nabla^\ell u-  \nabla^\ell \Pi_\Omega u}\dif x
    &\leq c\,r^{k}\frac{|\A u|(\Omega)}{\abs{\Omega}}.
  \end{align}
  holds for all $u\in\bv^{\A}(\Omega)$.
\end{corollary}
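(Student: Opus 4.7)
The plan is to reduce to the star-shaped case of Corollary~\ref{cor:poincare-ball} by decomposing the annulus into finitely many overlapping star-shaped pieces and chaining together the associated polynomial projections; the hypothesis $n\geq 2$ enters precisely at the chain-connectedness step, since an annulus in $\R^n$ is connected only for $n\geq 2$.

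I would first decompose $\frA=\frA_\lambda$ as $\Omega_1\cup\cdots\cup\Omega_N$ with $N=N(n)$, where each $\Omega_i$ is star-shaped with respect to a ball $B_i\subset\Omega_i\subset\frA$ of radius comparable to $r_B$ and with diameter at most a constant multiple of $r_B$, and the pieces can be enumerated so that consecutive ones overlap in a set of measure comparable to $r_B^n$. Since $\lambda\in[0,\tfrac12]$, the annulus has thickness at least $r_B/2$ and such a cover is easily constructed (for instance by placing overlapping balls around a middle sphere, with, for small $\lambda$, further balls filling the radial gap toward the center). Applying Corollary~\ref{cor:poincare-ball} on each $\Omega_i$ yields polynomials $\pi_i\in N(\A)$ with
\begin{align*}
  \sum_{\ell=0}^{k-1}\dashint_{\Omega_i}r_B^\ell\,\bigl|\nabla^\ell u-\nabla^\ell\pi_i\bigr|\dif x \leq c\,r_B^k\,\frac{|\A u|(\Omega_i)}{|\Omega_i|}.
\end{align*}
Because $\pi_i-\pi_{i+1}$ lies in the finite-dimensional space $N(\A)\subset\mathscr{P}_m$, the $\lebe^1$-bound on the overlap $\Omega_i\cap\Omega_{i+1}$ upgrades to a scale-invariant $\lebe^\infty$-bound on all derivatives of $\pi_i-\pi_{i+1}$ by polynomial norm equivalence (with constants uniform in $\lambda$ after rescaling by $r_B$). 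Chaining along the at most $N$ links and setting $\pi:=\pi_1$, the triangle inequality together with the cover yields
\begin{align*}
  \sum_{\ell=0}^{k-1}\dashint_{\frA}r_B^\ell\,\bigl|\nabla^\ell u-\nabla^\ell\pi\bigr|\dif x \leq c\,r_B^k\,\frac{|\A u|(\frA)}{|\frA|}.
\end{align*}
Finally, since $\pi\in N(\A)$ gives $\Pi_{\frA}\pi=\pi$ and hence $\pi-\Pi_{\frA}u=-\Pi_{\frA}(u-\pi)$, and since $\Pi_{\frA}$ is bounded from $\lebe^1(\frA)$ to $\lebe^\infty(\frA)$ with constant $c/|\frA|$ (again from polynomial norm equivalence on the finite-dimensional target space $N(\A;\frA)$, uniformly in $\lambda$), replacing $\pi$ by $\Pi_{\frA}u$ only costs a constant multiple of the same right-hand side, and~\eqref{eq:poincare-annulus} follows.

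The main obstacle is the first step together with the accompanying uniformity: one must verify that all constants---the star-shaped \Poincare{} estimate, the polynomial norm equivalences on the pieces and on the overlaps, and the $\lebe^1\to\lebe^\infty$ boundedness of $\Pi_{\frA}$---can be taken independent of $\lambda\in[0,\tfrac12]$. After rescaling by $r_B$, this reduces to the observation that the family $\{\frA_\lambda\}_{\lambda\in[0,1/2]}$ with a compatible choice of cover lies in a compact family of admissible geometric configurations, on which all these ingredients depend continuously; the only ingredient that genuinely fails for $n=1$ is the connectedness used to build the chain.
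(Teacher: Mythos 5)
Your proposal is correct and takes essentially the same route as the paper: decompose the annulus $\frA_\lambda$ (uniformly in $\lambda\in[0,\tfrac12]$, using its connectedness, which is where $n\geq 2$ enters) into finitely many overlapping star-shaped-with-respect-to-a-ball pieces of size comparable to $r_B$, chain the local polynomial approximants across the overlaps via finite-dimensionality and inverse estimates, and finally replace the chained polynomial by $\Pi_{\frA_\lambda}u$ using the projection property and $L^1$-stability. The only cosmetic difference is that you apply Corollary~\ref{cor:poincare-ball} on the pieces, so the local approximants already lie in $N(\A)$ and the final swap is a one-liner, whereas the paper chains the averaged Taylor polynomials from Lemma~\ref{lem:poincare-ball-pre} and converts to the projection at the end exactly as in Corollary~\ref{cor:poincare-ball}.
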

\begin{proof}
  The estimate as in Lemma~\ref{lem:poincare-ball-pre}
  involving~$\nabla^\ell \mathbb{P}^{m}_{B_\Omega} u$
  (where~$B_\Omega$ is a suitable sub-ball of~$\Omega$) follows
  for~$\Omega$ in fact by a standard argument and works in fact for
  any bounded Lipschitz domain. Since~$\Omega$ can be written as the
  finite union of overlapping subdomains~$\Omega_1, \dots, \Omega_N$
  (with $N$ depending only on~$n$) which are star shaped with respect
  to a ball. These subdomains can be constructed, such that
  $\Omega_j \cap \Omega_{j+1}$ contain a ball~$B_j$ of size equivalent
  to~$\Omega$ and $\Omega_j$ and $\Omega_{j+1}$ both are star shaped
  with respect to this ball~$B_j$. The difference of the averaged
  Taylor polynomials on two consecutive balls~$B_j$ and~$B_{j+1}$ can
  be estimated again by Lemma~\ref{lem:poincare-ball-pre}. Now, we can
  change $\mathbb{P}^m_{B_\Omega}$ to~$\Pi_\Omega$ exactly as in
  Corollary~\ref{cor:poincare-ball}.
\end{proof}
\begin{remark}
  \label{rem:john}
  Based on the technique introduced in \cite{DRS}, the
  Poincar\'{e}-type inequalities of the above form can moreover be
  established for bounded John domains, a fact that we shall pursue
  elsewhere.

  Note that, Corollary~\ref{cor:poincare-annulus} fails for~$n=1$. The
  problem is that the annuli are not connected for~$n=1$.
\end{remark}
It is well known that there exists a constant $c=c(n,m,\dim(V))>0$
such that for all
$q\in\mathscr{P}_{m}(\R^{n};V)$ and all balls~$B$ there holds
\begin{align}\label{eq:inverse}
  \frac{1}{c}\dashint_{B}|q|\dif x \leq \|q\|_{\lebe^{\infty}(B)}\leq c\dashint_{B}|q|\dif x. 
\end{align}
Such estimate are usually called \emph{inverse estimates}. In fact the
estimate follows by the equivalence of all norms on finite dimensional
spaces and scaling.

These inverse estimate on~$N(\A) \subset \mathscr{P}_m$ (for
bounded~$\Omega$) and the self-adjointness of~$\Pi_{\Omega}$ allows in
a standard way to extend~$\Pi_{\Omega}$ to a $L^1(\Omega)$ with
\begin{align}\label{eq:L1stability} 
  \dashint_{\Omega}|\Pi_{\Omega}u|\dif x \leq c \dashint_{\Omega}|u|\dif x. 
\end{align}
We later refer to this as the~$L^1$-stability of~$\Pi_\Omega$.

We need the following inverse estimates for polynomials that vanish at
the center of the ball.
\begin{lemma}
  \label{lem:inv-x}
  Let $m \in \setN_0$. Then there exists~$c=c(m,n)$ such that for all
  balls~$B$ with center~$x_0$,
  all~$\lambda \in (0,1]$ and all~$q \in \mathscr{P}_{m}(\R^{n};V)$ with
  $q(x_0)=0$, we have
  \begin{align*}
    \dashint_{\lambda B} \abs{q(x)}\dif x &\leq c\, \lambda
                                            \dashint_{B} \abs{q(x)}\,\dif x.
  \end{align*}
\end{lemma}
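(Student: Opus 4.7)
The plan is to reduce to a normalized setting by translation and scaling, and then exploit the fact that the space of polynomials of degree at most~$m$ vanishing at the origin is finite-dimensional.

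First I would translate so that $x_0 = 0$, and then rescale by the radius of~$B$ via the substitution $y = x/r_B$. Both operations preserve the class $\{q \in \mathscr{P}_m(\R^n;V) : q(x_0)=0\}$ as well as the form and homogeneity of the claimed inequality, so it suffices to prove the statement when $B = B(0,1)$. Since both sides of the inequality are homogeneous of degree one in~$q$, I may additionally normalize by $\dashint_{B(0,1)} |q|\dif x = 1$ (assuming $q \not\equiv 0$).

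Next I would write $q(x) = \sum_{1\leq |\alpha|\leq m} c_\alpha x^\alpha$ with $c_\alpha\in V$; the condition $q(0)=0$ precisely removes the constant term. The subspace
\[
X := \{ q \in \mathscr{P}_m(\R^n;V) : q(0) = 0\}
\]
is finite-dimensional (with dimension depending only on $m$, $n$, $\dim V$), so the two norms $q\mapsto \max_\alpha|c_\alpha|$ and $q \mapsto \dashint_{B(0,1)}|q|\dif x$ are equivalent on $X$. In particular, under the normalization above, $\max_{1\leq|\alpha|\leq m} |c_\alpha| \leq c(m,n,\dim V)$.

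Finally, for $x \in \lambda B = B(0,\lambda)$ with $\lambda\in(0,1]$, the bound $|x^\alpha| \leq \lambda^{|\alpha|}\leq \lambda$ (valid since $|\alpha|\geq 1$ and $\lambda\leq 1$) gives
\[
|q(x)| \leq \sum_{1\leq|\alpha|\leq m} |c_\alpha|\,\lambda^{|\alpha|} \leq c\,\lambda,
\]
hence $\dashint_{\lambda B}|q|\dif x \leq c\,\lambda = c\,\lambda \dashint_{B}|q|\dif x$, which is the claim. There is no real obstacle here: the only point to check is that the norm-equivalence constant depends solely on $m$, $n$, $\dim V$ and not on $\lambda$ or $B$, which is automatic because the entire argument takes place on the fixed reference ball $B(0,1)$ after rescaling.
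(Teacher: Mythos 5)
Your proof is correct and follows essentially the same route as the paper: reduce by translation and scaling to the unit ball, use the vanishing of $q$ at the center to gain one factor of $\lambda$, and invoke equivalence of norms on the finite-dimensional space of polynomials. The only cosmetic difference is that you extract the factor $\lambda$ from the monomial expansion $q(x)=\sum_{1\leq\abs{\alpha}\leq m}c_\alpha x^\alpha$ together with a coefficient-norm equivalence, whereas the paper factors $q(x)=x\cdot q_1(x)$ and compares $\max_{B(0,1)}\abs{q_1}$ with $\dashint_{B(0,1)}\abs{x}\abs{q_1}\dif x$; both are instances of the same finite-dimensionality argument.
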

\begin{proof}
  By translation and scaling it suffices to establish the claim in the
  case~$B=B(0,1)$.  Because of $q(0)=0$ we may write
  $q(x) = x\, \cdot q_1(x)$. Thus
  \begin{align*}
    \dashint_{\lambda B(0,1)} \abs{q(x)}\dif x
    &=
      \dashint_{\lambda B(0,1)} \abs{x \cdot q_1(x)}\dif x
      \leq \lambda \max_{B(0,1)} \abs{q_1(x)}.
  \end{align*}
  Now,
  \begin{align*}
    \max_{x\in B(0,1)} \abs{q_1(x)}
    &\eqsim
      \dashint_{B(0,1)} \abs{x} \abs{q_1(x)} \dif x,
  \end{align*}
  since both terms are norms on the finite dimensional
  space~$\mathscr{P}_{m}(\R^{n};V)$. Hence,
  \begin{align*}
    \dashint_{\lambda B(0,1)} \abs{q(x)}\dif x
    &\leq c\,
      \lambda     \dashint_{B(0,1)} \abs{x} \abs{q_1(x)} \dif x
      = c\, \lambda \dashint_B \abs{q}\dif x,
  \end{align*}
  where we have used in the last step inverse estimates for
  polynomials.
\end{proof}

\section{Oscillation estimates and the proof of Theorem~\ref{thm:main}}
\label{sec:oscill-estim}
\subsection{Oscillation estimates}
Throughout the entire section, let $n\geq 2$. We moreover tacitly
assume $\A$ to be a $k$-th order $\mathbb{C}$-elliptic operator of the
form \eqref{eq:form} and let $u\in\bv_{\locc}^{\A}(\R^{n})$. Moreover,
we fix $x_{0}\in\R^{n}$, $r>0$ and put $\ball:=\ball(x_{0},r)$. Toward
Theorem~\ref{thm:main}, we begin with the following lemma which allows to
control oscillations of~$u - \mean{u}_B$ be means of $u - \Pi_B u$.
\begin{proposition}
  \label{pro:osc-sum}
  There exists $c=c(\A)>0$ such that for all $u\in\lebe^{1}(\ball;V)$
  there holds for each ball~$B$ and annulus~$\frA = B \setminus \frac
  14 \overline{B}$
  \begin{align}
    \label{eq:osc-sum-main}
    \begin{aligned}
      \dashint_{2^{-j}B} \abs{u - \mean{u}_{2^{-j}B}}\dif x &\leq
      2^{-j} c\,\dashint_{\frA} \abs{u - \mean{u}_{\frA}}\dif x
      \\
      &\quad + c \dashint_{2^{-j}B} \abs{u -\Pi_{2^{-j}B} u}\dif x
      \\
      &\quad + c \sum_{m=0}^{j} 2^{m-j}\dashint_{2^{-m}\frA} \abs{u
        -\Pi_{2^{-m}\frA} u}\dif x.
    \end{aligned}
  \end{align}
  Moreover,
  \begin{align}
    \label{eq:osc-sum-main-ann}
    \begin{aligned}
      \dashint_{2^{-j}\frA} \abs{u - \mean{u}_{2^{-j}\frA}}\dif x &\leq
      2^{-j} c\,\dashint_{\frA} \abs{u - \mean{u}_{\frA}}\dif x
      \\
      &\quad + c \sum_{m=0}^{j} 2^{m-j}\dashint_{2^{-m}\frA} \abs{u
        -\Pi_{2^{-m}\frA} u}\dif x.
    \end{aligned}
  \end{align}
\end{proposition}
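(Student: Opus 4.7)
\emph{Strategy.} The plan is to prove both estimates by a common telescoping argument over the dyadic annuli~$\frA_m := 2^{-m}\frA$, $m = 0, 1, \ldots, j$, setting~$\pi_m := \Pi_{\frA_m}u \in N(\A)$ and~$B_j := 2^{-j}B$. The geometric weights~$2^{-j}$ and~$2^{m-j}$ appearing on the right-hand side signal that the key tool will be Lemma~\ref{lem:inv-x} applied to polynomials vanishing at~$x_0$ — at scale~$B$ with~$\lambda = 2^{-j}$ for the outermost term, and at scale~$B_m$ with~$\lambda = 2^{m-j}$ for the intermediate terms.

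\emph{Decomposition.} Since~$\mean{u}_{B_j}$ is within a factor~$2$ of the $L^1$-best constant for~$u$ on~$B_j$, we bound~$\dashint_{B_j}|u - \mean{u}_{B_j}|$ by~$2\dashint_{B_j}|u - a|$ for the convenient choice~$a := \pi_j(x_0)$ and split
\begin{align*}
u - \pi_j(x_0) = (u - \Pi_{B_j}u) + (\Pi_{B_j}u - \pi_j) + (\pi_j - \pi_j(x_0)).
\end{align*}
The first summand yields directly the projection term~$\dashint_{B_j}|u - \Pi_{B_j}u|$. The polynomial difference~$\Pi_{B_j}u - \pi_j$ is bounded on~$B_j$ via polynomial norm equivalence against~$\frA_j$ (of comparable measure) and then split by triangle inequality to produce~$c\,\dashint_{B_j}|u - \Pi_{B_j}u|$ plus~$c\,\dashint_{\frA_j}|u - \pi_j|$ — the latter contributing the~$m = j$ entry of the annular sum with weight~$2^{0} = 1$. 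The last piece~$\pi_j - \pi_j(x_0)$ is a polynomial vanishing at~$x_0$ that I would further telescope as
\begin{align*}
\pi_j - \pi_j(x_0) = (\pi_0 - \pi_0(x_0)) + \sum_{m=0}^{j-1}\bigl[(\pi_{m+1} - \pi_m) - (\pi_{m+1} - \pi_m)(x_0)\bigr],
\end{align*}
each summand again being a polynomial vanishing at~$x_0$.

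\emph{Applying Lemma~\ref{lem:inv-x}; main obstacle.} To each summand above we apply Lemma~\ref{lem:inv-x} at its natural scale: at scale~$B$ with~$\lambda = 2^{-j}$ for~$\pi_0 - \pi_0(x_0)$, and at scale~$B_m$ with~$\lambda = 2^{m-j}$ for the~$m$-th summand. The first yields~$c\,2^{-j}\dashint_B|\pi_0 - \pi_0(x_0)|$, which by polynomial norm equivalence between~$B$ and~$\frA$ and the $L^1$-stability~\eqref{eq:L1stability} of~$\Pi_\frA$ is controlled by~$c\,2^{-j}\dashint_\frA|u - \mean{u}_\frA|$, precisely the outer term of~\eqref{eq:osc-sum-main}. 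The second yields~$c\,2^{m-j}\dashint_{B_m}|\cdot|$, which by polynomial norm equivalence on the overlap~$\frA_m\cap\frA_{m+1}$ (of measure comparable to both) and triangle inequality splits into~$c\,2^{m-j}\bigl(\dashint_{\frA_m}|u - \pi_m| + \dashint_{\frA_{m+1}}|u - \pi_{m+1}|\bigr)$. Summing over~$m$ and reindexing produces the full weighted annular sum of~\eqref{eq:osc-sum-main}. The main technical subtlety lies in setting up the precise telescopic identity above so that \emph{every} summand genuinely vanishes at~$x_0$; this is what makes Lemma~\ref{lem:inv-x} applicable everywhere and generates the weights~$2^{m-j}$ that would be unavailable for a naive telescope through~$\pi_{m+1} - \pi_m$ alone, since the ``constants'' $(\pi_{m+1} - \pi_m)(x_0)$ carry no such factor. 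For the annular estimate~\eqref{eq:osc-sum-main-ann}, an identical argument applies with~$B_j$ replaced by~$\frA_j \subset B_j$; the middle summand~$(\Pi_{B_j}u - \pi_j)$ is now absent (explaining the absence of the stand-alone projection term in~\eqref{eq:osc-sum-main-ann}), as the telescope begins directly from~$\pi_j$, which is itself the~$m = j$ entry of the annular sum.
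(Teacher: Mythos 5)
Your proposal is correct and follows essentially the same route as the paper's proof: subtract the constant $(\Pi_{2^{-j}\frA}u)(x_0)$, split into the ball-projection term, the difference of the two projections, and the polynomial oscillation, then telescope the annular projections and invoke Lemma~\ref{lem:inv-x} (polynomials vanishing at $x_0$) to generate the weights $2^{-j}$ and $2^{m-j}$, with the annular version obtained by dropping the middle term. Your only deviations --- bounding $\Pi_{2^{-j}B}u-\Pi_{2^{-j}\frA}u$ by a direct triangle inequality on $2^{-j}\frA$ (absorbed into the $m=j$ entry of the sum) rather than via the identity $\Pi_{2^{-j}B}u-\Pi_{2^{-j}\frA}u=\Pi_{2^{-j}\frA}(\Pi_{2^{-j}B}u-u)$, and comparing $\pi_{m+1},\pi_m$ directly through $u$ on the overlap $2^{-m}\frA\cap2^{-m-1}\frA$ instead of through the intermediate projection used in the paper --- are cosmetic.
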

\begin{proof}
  We begin with the proof of~\eqref{eq:osc-sum-main}.  By routine means,
  we then find
  \begin{align}
    \label{eq:osc-sum-1}
    \begin{aligned}
      \dashint_{2^{-j}B} \abs{u - \mean{u}_{2^{-j}B}}\dif x &\leq
      2\,\dashint_{2^{-j}B} \abs{u(x) - (\Pi_{2^{-j}B} u)(x_0)}\dif x
      \\
      & \leq 2\dashint_{2^{-j}B} \abs{u - \Pi_{2^{-j}B} u}\dif x
      \\
      &\quad +2\,\norm{\Pi_{2^{-j}B}u - \Pi_{2^{-j}\frA}
        u}_{L^\infty(2^{-j} B)}
      \\
      &\quad +2\dashint_{2^{-j}B} \abs{\Pi_{2^{-j}\frA} u - (\Pi_{2^{-j}\frA}
        u)(x_0)}\dif x
      \\
      &=: \mathrm{I}+\mathrm{II} +\mathrm{III}.
    \end{aligned}
  \end{align}
  The term~$\textrm{I}$ is already suitable for later. We estimate
  \begin{align*}
    \textrm{II} &= 2\,\norm{\Pi_{2^{-j}B} - \Pi_{2^{-j}\frA}
                  u}_{L^\infty(2^{-j} B)}
    \\
    &=
                  2\,\norm{\Pi_{2^{-j}\frA}(u - \Pi_{2^{-j}B}  u)}_{L^\infty(2^{-j} B)}
    \\
    &\leq c\,\norm{\Pi_{2^{-j}\frA}(u - \Pi_{2^{-j}B}  u)}_{L^\infty(2^{-j} \frA)}
    \\
    &\leq c\, \dashint_{2^{-j} \frA} \abs{u - \Pi_{2^{-j}B}  u}
      \dif x
    \\
    &\leq c\, \dashint_{2^{-j} B} \abs{u - \Pi_{2^{-j}B}  u}
      \dif x
  \end{align*}
  using inverse estimates in the penultimate step.
  Let us estimate~\textrm{III}.  For notational brevity, put
  $p_{j}:=\Pi_{2^{-j}\frA}u$.  Next we employ a telescope sum argument to
  bound the term $\mathrm{III}$ by
  \begin{align}
    \label{eq:osc-sum-2}
    \begin{aligned}
      \mathrm{III} = \dashint_{2^{-j}B}&|p_j(x) - p_j(x_0)| \dif x
      \leq\dashint_{2^{-j}B} \abs{p_0(x) - p_0(x_0)}\dif x
      \\
      & + \sum_{m=0}^{j-1} \dashint_{2^{-j}B} \abs{(p_{m+1} -p_m)(x) -
        (p_{m+1} -p_m)(x_0)}\dif x.
    \end{aligned}
  \end{align}
  In conclusion, by Lemmas~\ref{lem:inv-x} and inverse estimates
  \begin{align*}
    \lefteqn{\textrm{III}_m := \dashint_{2^{-j}B} \abs{(p_{m+1} -p_m)(x) - (p_{m+1}
    -p_m)(x_0)}\dif x} \qquad &
    \\
                              &\leq c\, 2^{m-j} \dashint_{2^{-m}B} \abs{(p_{m+1} -p_m)(x) -
                                (p_{m+1} -p_m)(x_0)}\dif x
    \\
                              &\leq c\, 2^{m-j} \bigg( \dashint_{2^{-m}B} \abs{p_{m+1}
                                -p_m}\dif x + \abs{(p_{m+1} -p_m)(x_0)}\bigg)
    \\
                              &\leq c\, 2^{m-j} \norm{p_{m+1} -p_m}_{L^\infty(2^{-m}B)}
    \\
                              &\leq c\, 2^{m-j} \dashint_{2^{-m}\frA \cap
                                2^{-m-1} \frA} \abs{p_{m+1}
                                -p_m} \dif x.
  \end{align*}
  Let us abbreviate $\frA_{m+1/2} := 2^{-m} \frA \cap 2^{-m-1}\frA$
  and $p_{m+1/2} := \Pi_{2^{-m} \frA \cap 2^{-m-1} \frA}$. We estimate
  \begin{align*}
    \lefteqn{\dashint_{\frA_{m+1/2}} \abs{p_{m+1}
    -p_m} \dif x} \qquad&
    \\
                        &\leq\dashint_{\frA_{m+1/2}} \abs{p_{m+1/2}
                          -p_{m+1}} \dif x + \dashint_{\frA_{m+1/2}} \abs{p_{m+1/2}
                          -p_m} \dif x
    \\
                        &= c\,  \dashint\limits_{\frA_{m+1/2}} 
                          \abs{\Pi_{\frA_{m+1/2}} u-\Pi_{2^{-m-1}\frA}u
                          } \dif x
                          + c\,  \dashint\limits_{\frA_{m+1/2}} \!\!\!
                          \abs{\Pi_{\frA_{m+1/2}} u-\Pi_{2^{-m}\frA}u
                          } \dif x
    \\
                        & \leq c\,  \dashint\limits_{\frA_{m+1/2}}
                          \abs{\Pi_{\frA_{m+1/2}}(u-\Pi_{2^{-m-1}\frA}u) 
                          } \dif x
                          + c\,  \dashint\limits_{\frA_{m+1/2}}
                          \abs{\Pi_{\frA_{m+1/2}}(u-\Pi_{2^{-m}\frA}u) 
                          } \dif x
    \\
                        & \leq c\,  \dashint\limits_{\frA_{m+1/2}} \abs{u-\Pi_{2^{-m-1}\frA}u} \dif x
                          +  c\,\dashint_{\frA_{m+1/2}} \abs{u-\Pi_{2^{-m}\frA}u} \dif x
    \\
                        & \leq c\,  \dashint\limits_{2^{-m-1}\frA} \abs{u-\Pi_{2^{-m-1}\frA}u}                  \dif x
                          +  c\,\dashint_{2^{-m}\frA} \abs{u-\Pi_{2^{-m}\frA}u} \dif x.
  \end{align*}
  On the other hand, since $\Pi_{\frA}$ is a projection, we have 
  \begin{align*}
    \dashint_{B}
    \abs{p_0(x) - p_0(x_0)}\dif x
    &= \dashint_{B} \abs{(\Pi_\frA u)(x) - (\Pi_\frA u)(x_0)}\dif x
    \\
    &\leq \dashint_{B} \bigabs{ \big(\Pi_\frA (u -
      \mean{u}_{\frA}\big)(x)}\dif x + \bigabs{\big(\Pi_\frA (u -
      \mean{u}_\frA)\big)(x_0)}
    \\
    &\leq c\, \dashint_{\frA} \bigabs{ \Pi_\frA (u -
      \mean{u}_{\frA})}\dif x
    \\
    &\leq c\, \dashint_{\frA} \bigabs{ u -
      \mean{u}_{\frA}}\dif x,
  \end{align*}
  where we have again used inverse estimates in the penultimate step
  and the {$\lebe^1$-stability} of~$\Pi_\frA$
  (cf.~\eqref{eq:L1stability})  in the last step.

  We collect all estimates and get
  \begin{align*}
    \dashint_{2^{-j}B} \abs{u - \mean{u}_{2^{-j}B}}\dif x
    &\leq 2^{-j} c\,\dashint_{\frA} \abs{u -
      \mean{u}_{\frA}}\dif x
    \\
    &\quad + c
      \dashint_{2^{-j}B} \abs{u -\Pi_{2^{-j} B} u}\dif x
      \\
    &\quad + c \sum_{m=0}^{j}   
      2^{m-j}\dashint_{2^{-m}\frA} \abs{u -\Pi_{2^{-m}\frA} u}\dif x.
  \end{align*}
  This proves \eqref{eq:osc-sum-main}. The proof
  of~\eqref{eq:osc-sum-main} is analogous. Starting directly
  with~$2^{-j}\frA$ instead of $2^{-j} B$ allows to avoid~\textrm{II}
  and we obtain the better right-hand side.
\end{proof}
We now derive two useful consequences of Proposition~\ref{pro:osc-sum}.
\begin{corollary}
  \label{cor:osc-sum}
 Let $k < n$. Then there exists $c=c(\A)>0$ such that for all
  $u\in\lebe^{1}(\ball;V)$ there holds for each ball~$B$ with
  center~$x_0$
  \begin{align*}
      \sum_{j=0}^{\infty} \dashint_{2^{-j}B} \abs{u - \mean{u}_{2^{-j}B}}\dif x
      &\leq c\,\dashint_{B} \abs{u -
        \mean{u}_{B}}\dif x + c\, \mathcal{I}_{k}(\A u\mres\ball)(x_0).
  \end{align*}
\end{corollary}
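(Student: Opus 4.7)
The plan is to apply Proposition~\ref{pro:osc-sum} at every dyadic scale and sum over $j \geq 0$, converting the resulting $L^{1}$-oscillations of $u-\Pi_{\bullet}u$ into $|\A u|$-mass via the \Poincare{} estimates of Corollaries~\ref{cor:poincare-ball} and~\ref{cor:poincare-annulus}. The Riesz potential then emerges by comparing its defining integral with the dyadic sum $\sum_{m \geq 0}(2^{-m}r)^{k-n}|\A u|(2^{-m}\frA)$.

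Summing the three contributions on the right-hand side of \eqref{eq:osc-sum-main} over $j \geq 0$, the first one yields $c\dashint_{\frA}|u-\mean{u}_{\frA}|\dif x$ (geometric series), which is bounded by $c\dashint_{B}|u-\mean{u}_{B}|\dif x$ through a triangle inequality and $|\frA|\eqsim|B|$. For the second contribution, Corollary~\ref{cor:poincare-ball} applied to the ball $2^{-j}B$ gives $\dashint_{2^{-j}B}|u-\Pi_{2^{-j}B}u|\dif x \leq c\,(2^{-j}r)^{k-n}|\A u|(2^{-j}B)$. Using subadditivity and the dyadic decomposition $2^{-j}B\setminus\{x_{0}\}\subset\bigcup_{m \geq j}2^{-m}\frA$ (which is harmless since either $|\A u|(\{x_{0}\})=0$ or both sides of the claim are infinite), then exchanging the order of summation and invoking $\sum_{j=0}^{m}2^{j(n-k)}\leq c\,2^{m(n-k)}$ (here the hypothesis $k<n$ enters decisively), this contribution collapses to $c\sum_{m \geq 0}(2^{-m}r)^{k-n}|\A u|(2^{-m}\frA)$. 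The third contribution is treated analogously: Fubini yields $\sum_{j}\sum_{m=0}^{j}2^{m-j}a_{m}=2\sum_{m}a_{m}$ with $a_{m}=\dashint_{2^{-m}\frA}|u-\Pi_{2^{-m}\frA}u|\dif x$, and Corollary~\ref{cor:poincare-annulus} turns each $a_{m}$ into $c\,(2^{-m}r)^{k-n}|\A u|(2^{-m}\frA)$.

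It remains to identify $\sum_{m \geq 0}(2^{-m}r)^{k-n}|\A u|(2^{-m}\frA)$ with $\mathcal{I}_{k}(\A u\mres B)(x_{0})$. Every $y\in 2^{-m}\frA$ satisfies $|x_{0}-y|\eqsim 2^{-m}r$, so the contribution of $2^{-m}\frA$ to the Riesz integral is comparable to the corresponding summand, and the family $\{2^{-m}\frA\}_{m \geq 0}$ covers $B\setminus\{x_{0}\}$ with overlap bounded by an absolute constant. Summation concludes. The main obstacle is purely organisational: orchestrating the two Fubini rearrangements and verifying that the ball-sum, which individually diverges geometrically due to $k<n$, is ultimately absorbed by the annulus-sum with constants depending only on $n$, $k$ and~$\A$.
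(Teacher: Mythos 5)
Your proposal is correct and follows essentially the same route as the paper: sum Proposition~\ref{pro:osc-sum} over $j$, convert the $u-\Pi_{\bullet}u$ terms into $|\A u|$-mass via the \Poincare{} estimates, swap the order of summation for the annulus double sum, and compare the dyadic sums with $\mathcal{I}_{k}(\A u\mres\ball)(x_{0})$ using $|x-x_{0}|\eqsim 2^{-m}r$ on $2^{-m}\frA$ and the finite overlap of the annuli. The only cosmetic difference is in the ball term: the paper bounds $\sum_{j}(2^{-j}r)^{k-n}\indicator_{2^{-j}B}(x)\leq c\,|x-x_{0}|^{k-n}$ pointwise and integrates against $|\A u|$, whereas you decompose $2^{-j}B\setminus\{x_{0}\}$ into annuli and use $\sum_{j=0}^{m}2^{j(n-k)}\leq c\,2^{m(n-k)}$ — the same use of $k<n$, with your explicit remark about a possible atom at $x_{0}$ being a welcome clarification.
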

\begin{proof}
  Let $\frA = B \setminus \frac 14 \overline{B}$.
  We use Proposition~\ref{pro:osc-sum} and sum over~$j \in \setN_0$ to get
  \begin{align*}
    \lefteqn{\sum_{j=0}^{\infty} \dashint_{2^{-j}B} \abs{u -
    \mean{u}_{2^{-j}B}}\dif x} \qquad
    &
    \\
    &\leq c\,\dashint_{B} \abs{u -
      \mean{u}_{B}}\dif x  +\sum_{j=0}^{\infty}
                               \dashint_{2^{-j}B} \abs{u -\Pi_{2^{-j}B} u}\dif x
    \\
    &\quad + c \sum_{j=0}^{\infty}\sum_{m=0}^{j} 
                               2^{m-j}\dashint_{2^{-m}\frA} \abs{u
      -\Pi_{2^{-m}\frA} u}\dif x =:\mathrm{I}+\mathrm{II}+\mathrm{III}.
  \end{align*}
  Term $\mathrm{I}$ is already in a convenient form, whereas
  $\mathrm{III}$ can be estimated via the Cauchy product by
  \begin{align*}
    \mathrm{III} & \leq
                   c\sum_{m=0}^{\infty}\dashint_{2^{-m}\frA}|u-\Pi_{2^{-m}\mathfrak{A}}u|
                   \dif  x
    \\
                 & \leq c\sum_{m=0}^{\infty}(2^{-m}r)^{k-n}|\A
                   u|(2^{-m}\mathfrak{A})
    \\
                 & \leq c\int_{\ball}\frac{\dif |\A
                   u|(x)}{|x-x_{0}|^{n-k}} = c\, \mathcal{I}_{k}(\A
                   u\mres\ball)(x_{0}).  
  \end{align*}
  since $|x-x_{0}|\eqsim 2^{-m}r$ for any
  $x\in 2^{-m}\mathfrak{A}$ and the annuli $2^{-m}\mathfrak{A}$ have
  finite mutual overlap and all are contained in $\ball$. Turning to
  $\mathrm{II}$, we have
  \begin{align*}
    \sum_{j = 0}^{\infty} \dashint_{2^{-j} B} \abs{u - \Pi_{2^{-j}B} u}\dif x
    &\leq c
      \sum_{j = 0}^{\infty} (2^{-j}r)^{k-n}\abs{\opA u}(2^{-j}B)
    \\
    &\leq c\int_{B} \sum_{j = 0}^{\infty} (2^{-j}r)^{k-n}
      \indicator_{2^{-j} B}\dif|\A u|
    \\
    &\leq c \int_{B} \abs{x-x_0}^{k-n}\dif |\A u|
    \\
    &= \mathcal{I}_k(\opA u\mres\ball)(x_0).  
  \end{align*}
Gathering estimates, the proof is complete.
\end{proof}

\subsection{Proof of Theorem~\ref{thm:main}}
\label{sec:proofofthm}
After the preparations from the preceding subsection, we now turn to the proof of Theorem~\ref{thm:main}. 
\begin{proof}[Proof of Theorem~\ref{thm:main}~\ref{item:main1}]
  Let $u\in\bv_{\locc}^{\A}(\R^{n})$, $x_{0}\in\R^{n}$ and $r>0$ be
  such that $\mathcal{I}_{k}(\A
  u\mres\ball(x_{0},r))(x_{0})<\infty$. Then, by
  Corollary~\ref{cor:osc-sum}, we have
  \begin{align}\label{eq:Cauchy}
    \begin{split}
      |\mean{u}_{2^{-j}B}-\mean{u}_{2^{-l}B}| & \leq \sum_{i=l}^{j}|\mean{u}_{2^{-i}B}-\mean{u}_{2^{-i-1}B}| \\ & \leq 2^{n}\sum_{i=l}^{j}\dashint_{2^{-i}B}|u-\langle u\rangle_{2^{-i}B}|\dif x \to 0 
    \end{split}
  \end{align}
  as $j,l\to\infty$. Therefore, $\mean{u}_{2^{-i}B}\to q$ as
  $i\to\infty$ for some $q\in V$ and so, by
  Corollary~\ref{cor:osc-sum},
  \begin{align*}
    \dashint_{2^{-i}B}|u-q|\dif x \leq \dashint_{2^{-i}B}|u-\mean{u}_{2^{-i}B}|\dif x + |\mean{u}_{2^{-i}B}-q| \to 0.
  \end{align*}
  Since it suffices to consider balls $2^{-i}B$, the proof is complete. 
\end{proof}
\begin{proof}[Proof of Theorem~\ref{thm:main}~\ref{item:main2}]
  Recall that now $k=n$. Let $x_0 \in \setR^n$; our first objective is
  to prove that $x_{0}$ is a Lebesgue point for
  $u\in\bv^{\A}(\R^{n})$.  Let $B = B(x_0,r)$ and
  $\frA := B \setminus \frac 14 \overline{B}$. Then it follows from
  Proposition~\ref{pro:osc-sum} and
  Corollary~\ref{cor:poincare-annulus} (applied with~$\lambda=0$ and
  $\lambda=\frac 14$) that
  \begin{align*}
    \dashint_{2^{-j} B} \abs{u - \mean{u}_{2^{-j}B}}\dif x
    &\leq
      2^{-j} c\,\dashint_{\frA} \abs{u - \mean{u}_{\frA}}\dif x
    \\
    &\quad
      + c\, \abs{\opA u}(2^{-j} B \setminus \set{x_0})
      + c \sum_{m=0}^{j} 2^{m-j} \abs{\opA u}(2^{-m}\frA)
    \\
    &\leq
      2^{-j} c\,\dashint_{B} \abs{u - \mean{u}_{B}}\dif x + c
      \abs{\opA u}(B \setminus \set{x_0}). 
  \end{align*}
  Thus, if follows that for $0<s <\frac{r}{2}$ we have
  \begin{align*}
    \dashint_{B(x_0,s)} \abs{u - \mean{u}_{B(x_0,s)}}\dif x
    &\leq
      c\,\frac{s}{r} \dashint_{B(x_0,r)} \abs{u - \mean{u}_{\frA}}\dif x + c
      \abs{\opA u}(B(x_0,r) \setminus \set{x_0}). 
  \end{align*}
  Let $\epsilon>0$ be arbitrary. Since
  $B(x_0,r) \setminus \set{x_0} \to \emptyset$ for $r \to 0$
  and~$\abs{\A u}$ is a Radon measure, hence outer regular, we find~$r>0$ such that $
  \abs{\opA u}(B(x_0,r) \setminus \set{x_0}) < \frac \epsilon 2$. Now,
  we can choose~$s$ so small that also the integral on the right hand
  side becomes small than~$\frac{\epsilon}{2}$. Since $\epsilon>0$ was
  arbitrary this proves that
  \begin{align}
    \label{eq:vmo}
    \lim_{s \searrow 0} \dashint_{B(x_0,s)} \abs{u -
    \mean{u}_{B(x_0,s)}}\dif x &= 0.
  \end{align}
  Recall that $B=B(x_0,r)$ and put
  $\frA := B \setminus \frac 14 \overline{B}$. Then~\eqref{eq:vmo}
  implies that
  \begin{align}
    \label{eq:diff-A-B-mean}
    \lim_{j \to \infty} \abs{\mean{u}_{2^{-j} B} - \mean{u}_{2^{-j}\frA}}
    &\leq c\,\lim_{j \to \infty} \dashint_{2^{-j} B} \abs{u -
      \mean{u}_{2^{-j} B}}\dif x = 0.
  \end{align}
  Now, by Proposition~\ref{pro:osc-sum} and
  Corollary~\ref{cor:poincare-annulus} (with~$\lambda=0$ and $\lambda=
  \frac 14$) and $k=n$ we have
  \begin{align*}
    \lefteqn{\abs{\mean{u}_{2^{-j} \frA} -
    \mean{u}_{2^{-j-1}\frA}} } \qquad
    &
      \\
    &\leq
      c\,\dashint_{2^{-j}\frA} \abs{u - \mean{u}_{2^{-j}\frA}}\dif x
    \\
    &\leq
      c\,2^{-j} \dashint_{\frA} \abs{u - \mean{u}_{\frA}}\dif x
      + c \sum_{m=0}^{j} 2^{m-j}\dashint_{2^{-m}\frA} \abs{u
      -\Pi_{2^{-m}\frA} u}\dif x
    \\
    &\leq
      c\, 2^{-j} \dashint_{\frA} \abs{u - \mean{u}_{\frA}}\dif 
      + c \sum_{m=0}^{j} 2^{m-j} \abs{\A u}(2^{-m}\frA).
  \end{align*}
  Summing over~$j \geq 0$ we obtain
  \begin{align}
    \label{eq:meanA-summable}
    \begin{aligned}
      \sum_{j \geq 0} \abs{\mean{u}_{2^{-j} \frA} -
        \mean{u}_{2^{-j-1}\frA}} &\leq c\,\dashint_{\frA}
      \abs{u - \mean{u}_{\frA}}\dif x + c \sum_{j\geq 0}
      \sum_{m=0}^{j} 2^{m-j} \abs{\A u}(2^{-m}\frA)
      \\
      &\leq c\,\dashint_{\frA} \abs{u - \mean{u}_{\frA}}\dif x +
      c\, \abs{\A u}(B \setminus \set{x_0}).
    \end{aligned}
  \end{align}
  This proves that $u_0 := \lim_{j \to \infty} \mean{u}_{2^{-j} \frA}$
  exists. Due to~\eqref{eq:diff-A-B-mean} we see that also
  $u_0 = \lim_{j \to \infty} \mean{u}_{2^{-j} B}$. Thus, it follows
  by~\eqref{eq:vmo} that
  \begin{align*}
    \lim_{j \to \infty} \dashint\limits_{2^{-j}B} \abs{u - u_0}\dif x
      &\leq
    \limsup_{j \to \infty} \dashint\limits_{2^{-j}B} \abs{u -
        \mean{u}_{2^{-j} B}}\dif x + \limsup_{j \to \infty}
        \abs{\mean{u}_{2^{-j} B}-u_0} = 0.
  \end{align*}
  This proves that~$x_0$ is a Lebesgue point. Since $x_0$ was
  arbitrary, we see that all points of~$u$ are Lebesgue points. In the
  following we choose~$u$ to be the unique representative, which
  coincides with Lebesgue point limits.

  Again let $B$ be ball with center~$x_0$ with radius~$r>0$. Now, let
  $y \in \frac 12 B$ be fixed. Then we can choose a ball
  $B' \subset B \setminus \frac 12 \overline{B}$ such that the sets
  \begin{align*}
    C_{x_0} &:= \convexhull(\overline{B'} \cup \set{x_{0}}),
    \\
    C_y &:= \convexhull(\overline{B'} \cup \set{y}),
  \end{align*}
  satisfy $y \notin C_{x_0}$ and $x \notin C_y$. This geometric constellation is depicted in Figure~\ref{fig:continuity}. 
 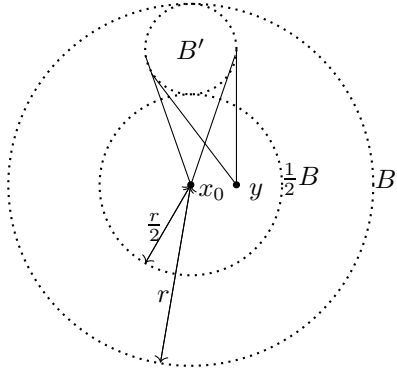
\begin{figure}
    \begin{tikzpicture}[scale=0.8]
      \draw [dotted, thick] (-1,0) circle [radius = 3cm];
      \draw [dotted, thick] (-1,0) circle [radius = 1.5cm];
      \draw [dotted, thick] (-1,2.25) circle [radius = 0.75cm];
      \draw [fill=black] (-1,0) circle [radius=0.05];
      \draw [fill=black] (-0.25,0) circle [radius=0.05];
      \node at (-0.65,-0.125) {$x_{0}$}; 
      \node at (0.075,-0.125) {$y$};
      \node at (-1,2.25) {$\ball'$};
      \draw (-1,0) to (-1.75,2.15);
      \draw (-1,0) to (-0.27,2.15);
      \draw (-0.25,0) to (-0.25,2.25);
      \draw (-0.25,0) to (-1.65,1.85);
      \draw [fill=black, <->] (-1,0) to (-1.5,-2.95);
      \node at (-1.45,-1.85) {$r$};
      \draw [fill=black, <->] (-1,0) to (-1.75,-1.3);
      \node at (-1.6,-0.7) {$\tfrac{r}{2}$};
      \node at (0.8,0.1) {$\frac 12 B$};
      \node at (2.2,0.1) {$B$};
    \end{tikzpicture}
    \caption{The geometric situation in the proof of Theorem~\ref{thm:main}\ref{item:main2}.}
    \label{fig:continuity}
 \end{figure}
  Now, we use the representation formula~\eqref{eq:repformula} with $B_\Omega = B'$ to get
  \begin{align}
    \label{eq:2}
    \begin{aligned}
      \abs{u(x_0) - (\mathbb{P}^m_{B'} u)(x_0)} &\leq c\,\abs{\bbA
        u}(C_{x_0}),
      \\
      \abs{u(y) - (\mathbb{P}^m_{B'} u)(y)} &\leq c\,\abs{\bbA
        u}(C_y).
    \end{aligned}
  \end{align}
  This can be improved to
  \begin{align}\label{eq:improve}
  \begin{split}
    \abs{u(x_0) - (\mathbb{P}^m_{B'} u)(x_0)} &\leq c\,\abs{\bbA u}(C_{x_0}
                                            \setminus \set{x_{0}}),
    \\
    \abs{u(y) - (\mathbb{P}^m_{B'} u)(y)} &\leq c\,\abs{\bbA u}(C_y
                                            \setminus \set{y}). 
                                            \end{split}
  \end{align}
  as follows: It suffices to prove the first estimate. Denote $x_0'$
  the center of~$B'$ and put
  $x_j := (1-\frac 1j)x_{0} + \frac 1j x_0'$ and
  choose~$\theta \in (0,1)$ so small that
  $B(x_j, 2\theta \abs{x_j-x_0}) \subset C_{x_0} \setminus
  \set{x_0}$. Taking the average of the representation
  formula~\eqref{eq:repformula} for
  every~$x \in B(x_j, 2\theta \abs{x_j-x_0})$ we obtain
  \begin{align*}
     \bigabs{ \mean{u}_{B(x_j, 2\theta
     \abs{x_j-x_0})} - \mean{\mathbb{P}_{\ball'}^{m}u}_{B(x_j, 2\theta
    \abs{x_j-x_0})}} &\leq c\, \abs{\opA u}(\convexhull(\overline{B'}
    \cup \set{x_0}))
    \\
    &\leq c|\A u|(C_{x_{0}}\setminus\{x_{0}\})
  \end{align*}
  using also that
  $\convexhull(\overline{B'} \cup \set{x_0}) \subset
  C_{x_{0}}\setminus\{x_{0}\}$ for every
  $x \in B(x_j, 2\theta \abs{x_j-x_0})$.  Now, \eqref{eq:improve}
  follows by passing with~$j \to \infty$ and using that~$x_0$ is a
  Lebesgue point and that $\mathbb{P}_{\ball'}^{m}u$ is continuous.
  
  Using~\eqref{eq:improve} we get
  \begin{align*}
    \abs{u(x_0)-u(y)}
    &\leq
      \abs{u(x_0) - (\mathbb{P}^m_{B'} u)(x_0)} 
      +\abs{u(y) - (\mathbb{P}^m_{B'} u)(x_0)} \\ &
      + \abs{(\mathbb{P}^m_{B'}u)(x_0)) - (\mathbb{P}^m_{B'} u)(y)}
    \\
    &\leq c\,\abs{\bbA u}(C_{x_0}
      \setminus \set{x_0}) + c\,\abs{\bbA u}(C_y
      \setminus \set{y}) + \abs{(\mathbb{P}^m_{B'} u)(x_0)) - (\mathbb{P}^m_{B'} u)(y)}
    \\
    &\leq c\,\abs{\bbA u}(B \setminus \set{x_0})
      + \abs{(\mathbb{P}^m_{B'} u)(y)) - (\mathbb{P}^m_{B'} u)(x_0)}.
  \end{align*}
  We further estimate 
  \begin{align*}
    \abs{(\mathbb{P}^m_{B'} u)(y)) - (\mathbb{P}^m_{B'} u)(x_0)}
    &\leq \abs{x_0-y} \,\norm{\nabla (\mathbb{P}^m_{B'} u)}_{L^\infty(B)}
    \\
    &\leq c\,\frac{\abs{x_0-y}}{r} \dashint_{B'} \abs{\mathbb{P}^m_{B'} (u-\mean{u}_B)}\dif x
    \\
    &\leq c\,\frac{\abs{x_0-y}}{r} \dashint_{B'} \abs{u-\mean{u}_B}\dif x
    \\
    &\leq c\,\frac{\abs{x_0-y}}{r} \dashint_{B} \abs{u-\mean{u}_B}\dif x,
  \end{align*}
  where we have used inverse estimates for polynomials and the
  $L^1$-stability of the averaged Taylor polynomial. Overall, we obtain
  \begin{align}
    \label{eq:continuity}
    \abs{u(x_0)-u(y)}
    &\leq  c\,\abs{\bbA u}(B \setminus \set{x_0}) 
    + c\,\frac{\abs{x_0-y}}{r} \dashint_{B} \abs{u-\mean{u}_B}\,dx.
  \end{align}
  This proves that~$u$ is continuous at~$x_0$ and
  also~\eqref{eq:cont-est-thm}. Indeed, for given $\varepsilon>0$ we
  choose $r>0$ such that
  $|\A u|(\ball\setminus\{x_{0}\})<\varepsilon$. Then, taking the
  constant $c>0$ from \eqref{eq:continuity}, we choose $0<\delta<r$ so
  small such that $|x_{0}-y|<\delta$ implies
  $c|x_{0}-y| r^{-1}\dashint_{\ball}|u|\dif z < \varepsilon$. This
  yields $|u(x_{0})-u(y)|<c\varepsilon$ for all $y\in\R^{n}$ with
  $|x_{0}-y|<\delta$, and so $u$ is continuous. The proof is complete.
\end{proof}
\begin{proof}[Proof of Theorem~\ref{thm:main}~\ref{item:main3}]
  Recall that now $\A$ is a $\mathbb{C}$-elliptic differential operator of
  order $k>n$. Then we can proceed exactly as in the
  case~\ref{item:main2} but with $\nabla^{k-n} u$ instead of~$u$. In
  the applications of the \Poincare{} type estimates on the annuli
  (see Corollary~\ref{cor:poincare-annulus}) we choose~$\ell =
  k-n$. With the same arguments as in the proof of
  Theorem~\ref{thm:main}\ref{item:main2} we find that every point 
  of~$\nabla^{n-k} u$ is a Lebesgue point and that for all~$x \in \Rn$
  and $r>0$
  \begin{align}
    \label{eq:continuity-higher}
    \begin{aligned}
      \abs{(\nabla^{k-n} u)(x_0)-(\nabla^{k-n} u)(y)} &\leq
      c\,\abs{\bbA u}(B \setminus \set{x_0})
      \\
      &\quad + c\,\frac{\abs{x_0-y}}{r} \dashint_{B}
      \abs{\nabla^{k-n} u -\mean{\nabla^{k-n} u}_B}\dif x.
    \end{aligned}
  \end{align}
  for all $y\in\R^{n}$ with $|x-y|<\tfrac{1}{2}r$. This implies again that~$\nabla^{k-n} u$ is continuous, so $u \in
  C^{k-n}$. This proves the claim.
\end{proof}
\begin{remark}
  \label{rem:Linfty}
  Note that from the representation formula, see also~\eqref{eq:2}, we
  immediately obtain for~$k\geq n$ the local $L^\infty$-bound
  \begin{align}
    \norm{\nabla^{k-n} u}_{L^\infty(B)} &\leq c \dashint_B
                                          \abs{\nabla^{k-n} u}\,dx +
                             c\,\abs{\opA u}(B). 
  \end{align}
  This also implies (with $B \to \Rn$) that for~$k \geq n$
  \begin{align}
    \norm{\nabla^{k-n} u}_{L^\infty(\Rn)} &\leq
                             c\,\abs{\opA u}(\Rn). 
  \end{align}
  Moreover, the extension operator from \cite{GR1} allows us to apply
  these estimates and Theorem~\ref{thm:main} to bounded Lipschitz
  domains. In particular, we obtain for $k\geq n$ the embedding
  $\bv^{\A}(\Omega)\hookrightarrow\hold^{k-n}(\overline{\Omega};V)$.
\end{remark}

\begin{remark}[Singletons]
  The step from~\eqref{eq:2} to~\eqref{eq:improve} can also be
  obtained by a different argument, namely that $\abs{\opA
    u}(\set{x_0})=0$ for each~$x_0$, i.e. $\opA u$ cannot charge singletons. 

  One possibility to prove this is that every~$\setC$-elliptic operator 
  is elliptic and cancelling (cf.~\cite{GR1}) and thus by~\textsc{Van Schaftingen}
  \cite[Prop.~2.1]{VS13}, $\abs{\opA u}$ cannot charge singletons.

  Let us present an alternative proof based on the trace theorem
  of~\cite{BDG}, for simplicity stated for first order operators. Let $\Sigma$ be a Lipschitz hypersurfaces passing
  through the points~$x_0$. Then by the gluing
  theorem~\cite[Proposition~4.12]{BDG}
  \begin{align}\label{eq:jumps}
    \A u\mres \Sigma =
    (u_{\Sigma}^{+}-u_{\Sigma}^{-})\otimes_{\A}
    \nu\mathscr{H}^{n-1}\mres \Sigma,
  \end{align}
  where $\nu$ is the unit normal to~$\Sigma$ and
  $u_{\Sigma}^{+},u_{\Sigma}^{-}$ are the left- or right-sided traces
  along $\Sigma$, respectively, which exist in
  $\lebe^{1}(\Sigma;V)$
  by~\cite[Thm.~1.1]{BDG}. This implies
  \begin{align}\label{eq:AC}
    \A u\mres\Sigma \ll \mathscr{H}^{n-1}\mres\Sigma.
  \end{align}
  Thus $\opA u$ cannot charge any $\mathscr{H}^{n-1}$-nullset contained in~$\Sigma$. Thus, in particular, $\abs{\opA u}(\set{x_0}) =0$. Let us note that, based on the proof of \cite[Thm.~4.18]{BDG}, for elliptic first order operators $\A$, \eqref{eq:AC} for all $u\in\bv^{\A}(\R^{n})$ and Lipschitz hypersurfaces $\Sigma$ is in fact equivalent to $\mathbb{C}$-ellipticity. 
\end{remark}
\begin{remark}  
The representability \eqref{eq:jumps} could also be used to describe points in the jump set $J_{u}$ on hypersurfaces. However, working from Theorem~\ref{thm:main}\ref{item:main1}, it is not immediately clear how the Riesz potential criterion should yield $|\A u|(S_{u}\setminus J_{u})=0$, which would be required for a proper structure theory for $\bv^{\A}$-maps. We intend to tackle this question in the future. 
\end{remark}

\end{document}